\theoremstyle{plain}
\newtheorem{theor10}{Theorem}
\newtheorem{prop10}{Proposition}
\newtheorem{cor10}{Corollary}
\newtheorem{theor0}{Theorem}[section]
\newenvironment{theor}
  {\pushQED{\qed}\begin{theor0}}
  {\popQED\end{theor0}}
\newtheorem{lem0}[theor0]{Lemma}
\newenvironment{lem}
  {\pushQED{\qed}\begin{lem0}}
  {\popQED\end{lem0}}
\newtheorem{prop0}[theor0]{Proposition}
\newenvironment{prop}
  {\pushQED{\qed}\begin{prop0}}
  {\popQED\end{prop0}}
\newtheorem{cor0}[theor0]{Corollary}
\newenvironment{cor}
  {\pushQED{\qed}\begin{cor0}}
  {\popQED\end{cor0}}
\newtheorem{propr0}[theor0]{Property}
\newtheorem{hyp0}[theor0]{Hypothesis}
\newtheorem{result0}[theor0]{Result}
\newtheorem{conj0}[theor0]{Conjecture}
\newenvironment{conj}
  {\pushQED{\qed}\begin{conj0}}
  {\popQED\end{conj0}}
\newtheorem{heur0}[theor0]{Heuristics}
\theoremstyle{definition}
\newtheorem{defin0}[theor0]{Definition}
\newtheorem{rems0}[theor0]{Remarks}
\newtheorem{ex0}[theor0]{Example}
\newtheorem{exs0}[theor0]{Examples}
\newtheorem{rem0}[theor0]{Remark}
\newenvironment{rem}
  {\pushQED{\qed}\begin{rem0}}
  {\popQED\end{rem0}}
\newtheorem{qu0}[theor0]{Question}
\newtheorem{qus0}[theor0]{Questions}
  \newtheorem{as0}[theor0]{Assumption}
\mathchardef\emptyset="001F
\numberwithin{equation}{section}
\newcommand{\N}{\mathbb N}
\newcommand{\e}{\varepsilon}
\newcommand{\Hc}{\mathcal{H}}
\newcommand{\R}{\mathbb R}
\newcommand{\Z}{\mathbb Z}
\newcommand{\F}{\mathcal F}
\newcommand{\loc}{{\operatorname{loc}}}
\newcommand{\Id}{\operatorname{Id}}
\newcommand{\per}{{\operatorname{per}}}
\newcommand{\ee}{e}
\newcommand{\Aa}{\boldsymbol a}
\newcommand{\bb}{{\boldsymbol b}}
\newcommand{\Ld}{\operatorname{L}}
\newcommand{\step}[1]{\noindent \textit{Step} #1.}
\newcommand{\Pm}{\mathbb{P}}
\newcommand{\expec}[1]{\mathbb{E}\left[ #1 \right]}
\newcommand{\expecm}[1]{\mathbb{E}\big[ #1 \big]}
\title[A remark on a result by Bourgain in homogenization]{A remark on a surprising result by Bourgain\\in homogenization}
\author[M. Duerinckx]{Mitia Duerinckx}
\author[A. Gloria]{Antoine Gloria}
\author[M. Lemm]{Marius Lemm}
\address[Mitia Duerinckx]{École Normale Supérieure de Lyon, UMR 5669, Unité de Mathématiques Pures et Appliquées, Lyon, France \& Universit\'e Libre de Bruxelles, Département de Mathématique, Brussels, Belgium}
\email{mduerinc@ulb.ac.be}
\address[Antoine Gloria]{Sorbonne Universit\'e, UMR 7598, Laboratoire Jacques-Louis Lions, Paris, France \& Universit\'e Libre de Bruxelles, Département de Mathématique, Brussels, Belgium}
\email{gloria@ljll.math.upmc.fr}
\address[Marius Lemm]{Institute for Advanced Study, School of Mathematics, Princeton, USA \& Harvard University, Department of Mathematics, Cambridge, USA}
\email{mlemm@math.harvard.edu}
\begin{document}
\selectlanguage{english}

\begin{abstract}
In a recent work, Bourgain gave a fine description of the expectation of solutions of discrete linear elliptic equations on $\Z^d$ with random 
coefficients in a perturbative regime using tools from harmonic analysis. 
This result is surprising for it goes beyond the expected accuracy suggested by recent results in quantitative stochastic homogenization.
In this short article we reformulate Bourgain's result in a form that highlights its interest to the state-of-the-art in homogenization (and especially the
theory of fluctuations), and we state several related conjectures.
\end{abstract}

\maketitle

\section{Introduction}
Let $\Aa$ be a stationary and ergodic random coefficient field on $\R^d$, constructed on some probability space $(\Omega,\Pm)$, that satisfies the boundedness and ellipticity properties
\begin{align}\label{eq:unif-ell}
|\Aa(x,\omega)\ee|\le|\ee|,\qquad\ee\cdot\Aa(x,\omega)\ee\ge\lambda|\ee|^2,\qquad\text{for all $x,\ee\in\R^d$ and $\omega\in\Omega$},
\end{align}
for some $\lambda>0$.
For all deterministic vector fields $f\in\Ld^2(\R^d)^d$ and $\omega\in\Omega$, we consider the unique Lax-Milgram solution $u_f(\cdot,\omega)\in\dot H^1(\R^d)$ of the following elliptic PDE in $\R^d$,
\[-\nabla\cdot\Aa(\cdot,\omega) \nabla u_f(\cdot,\omega)=\nabla\cdot f.\]
The solution operator (or Helmholtz projection) $\Hc:=\nabla(\nabla\cdot\Aa\nabla)^{-1}\nabla\cdot:f\mapsto-\nabla u_f$ is then a bounded operator $\Ld^2(\R^d)\to\Ld^2(\R^d\times\Omega)$. In this note, we aim at studying the average $\nabla\expec{u_f}=-\expec{\Hc}f$ of the solution operator with respect to the underlying ensemble of coefficient fields --- a problem which seems to have been set aside so far in the homogenization community
and is particularly relevant in the setting of fluctuations (cf.\@ Section~\ref{sec:fluc}).
The following straightforward lemma elucidates the structure of this averaged solution operator; a short proof is included in Appendix~\ref{app}.

\begin{lem}\label{lem:main}
With the above notation, there is a unique self-adjoint convolution operator~$B$ on $\Ld^2(\R^d)$
that satisfies $-\lambda\triangle \le B \le -\triangle$ and such that
for all $f\in\Ld^2(\R^d)^d$ the averaged solution $\expec{u_f}\in\dot H^1(\R^d)$ is the unique Lax-Milgram solution of
\[B\,\expec{u_f}=\nabla\cdot f.\qedhere\]
\end{lem}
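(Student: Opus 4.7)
The plan is to exploit stationarity to identify $\expec{\Hc}$ as a Fourier multiplier, read off $B$ by inversion, and then verify the spectral bound $-\lambda\triangle\le B\le-\triangle$.

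First, by stationarity of $\Aa$ and invariance of $\Pm$, one checks that $\expec{\Hc}$ commutes with spatial translations on $\R^{d}$; combined with its $\Ld^{2}$-boundedness this identifies $\expec{\Hc}$ with a matrix-valued Fourier multiplier $m(\xi)$. Since for each realisation $\Hc$ annihilates divergence-free vector fields (these do not enter $\nabla\cdot f$) and its image consists of gradients, the symbol must factorise through the longitudinal projector as $m(\xi)=\alpha(\xi)\,\xi\xi^{T}/|\xi|^{2}$ for some scalar function $\alpha(\xi)$. Testing the PDE against $u_{f}$ yields the deterministic identity $\int\Hc f\cdot f=\int\nabla u_{f}\cdot\Aa\nabla u_{f}\ge 0$; combined with Fubini, this forces $\alpha$ to be real-valued and non-negative, so $m(\xi)$ is automatically symmetric and $\expec{\Hc}$ is self-adjoint.

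I would then define $B$ as the convolution operator with symbol $\hat B(\xi):=|\xi|^{2}/\alpha(\xi)$. A short Fourier computation starting from $\nabla\expec{u_{f}}=-\expec{\Hc}f$ gives $\widehat{\expec{u_{f}}}(\xi)=i\,\alpha(\xi)(\xi\cdot\hat f(\xi))/|\xi|^{2}$, whence $\hat B(\xi)\,\widehat{\expec{u_{f}}}(\xi)=i\xi\cdot\hat f(\xi)=\widehat{\nabla\cdot f}(\xi)$, as required. Uniqueness within the class of self-adjoint convolution operators is then immediate, since any such candidate is a real Fourier multiplier whose symbol is pinned down by this relation on the dense image of $\nabla\cdot$.

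The main content, and the step requiring genuine PDE input, is the spectral bound $1\le\alpha(\xi)\le 1/\lambda$, equivalent to $-\lambda\triangle\le B\le-\triangle$. The upper bound is routine: from $\int\expec{\Hc}f\cdot f=\expec{\int\nabla u_{f}\cdot\Aa\nabla u_{f}}$ together with the Lax--Milgram estimate $\|\nabla u_{f}\|_{\Ld^{2}}\le\|f\|_{\Ld^{2}}/\lambda$ one obtains $\int\expec{\Hc}f\cdot f\le\|f\|_{\Ld^{2}}^{2}/\lambda$, and Plancherel on gradient test fields translates this into $\alpha(\xi)\le 1/\lambda$. The lower bound is the main obstacle, and I would address it variationally: since $u_{f}$ minimises the Dirichlet functional $J(u)=\tfrac{1}{2}\int\nabla u\cdot\Aa\nabla u+\int\nabla u\cdot f$, testing at the competitor $u=-g$ for $f=\nabla g$ and using $\Aa\le\Id$ yields $J(-g)\le-\tfrac{1}{2}\|f\|_{\Ld^{2}}^{2}$. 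Combining with the minimiser identity $J(u_{f})=\tfrac{1}{2}\int\nabla u_{f}\cdot f=-\tfrac{1}{2}\int\Hc f\cdot f$, the inequality $J(u_{f})\le J(-g)$ rearranges to $\int\Hc f\cdot f\ge\|f\|_{\Ld^{2}}^{2}$ for every gradient $f$. Taking expectations and applying Plancherel on gradient fields then gives $\alpha(\xi)\ge 1$, closing the argument.
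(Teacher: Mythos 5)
Your route is the same as the paper's appendix proof: stationarity makes $\expec{\Hc}$ a convolution operator, its gradient image and vanishing on solenoidal fields force the symbol to be longitudinal, the operator is sandwiched between the constant-coefficient Helmholtz projection $H_0=\nabla\triangle^{-1}\nabla\cdot$ and $\tfrac1\lambda H_0$, and $B$ is obtained by inverting the scalar part of the symbol. You in fact supply more detail than the paper, which merely asserts the sandwich $H_0\le\Hc\le\tfrac1\lambda H_0$: your energy estimate gives the upper bound $\alpha\le 1/\lambda$, and your Dirichlet-principle comparison at the competitor $-g$ is the standard proof of the lower bound $\alpha\ge1$.

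Two caveats. First, the inference ``$\int\expec{\Hc}f\cdot f\ge0$ for all real $f$, hence $\alpha$ is real and $\expec{\Hc}$ is self-adjoint'' is not valid: positivity of the quadratic form on real fields only controls the Hermitian part of the multiplier, and a symbol such as $\alpha(\xi)=1+i\,t\,\sgn(\xi_1)$ (real kernel, nonnegative form) passes this test without being self-adjoint. Self-adjointness should instead come from the pathwise identity $\Hc^*=\Hc$, which holds when $\Aa(x,\omega)$ is pointwise symmetric (in general the adjoint is the solution operator associated with $\Aa^T$). Second, both this repair and your variational lower bound (which uses that $u_f$ minimizes $J$) require symmetry of $\Aa$, which \eqref{eq:unif-ell} does not state; this restriction is, however, intrinsic rather than a defect of your argument: for non-symmetric fields the claimed bound $B\le-\triangle$ can genuinely fail (a laminate of rotations by an angle $\theta(x_2)$ satisfies \eqref{eq:unif-ell} with $|\Aa\ee|=|\ee|$, yet $\ee_1\cdot\bar\Aa^1\ee_1=\expec{1/\cos\theta}>1$), so the paper's own assertion $H_0\le\Hc$ likewise presupposes the symmetric setting. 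Within that setting, your proof is complete once the self-adjointness step is fixed as indicated.
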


This motivates a detailed study of the properties of the Fourier symbol $\hat B$.
In view of homogenization regimes, we are particularly interested in the regularity of $\hat B$ at the origin.
Following a preliminary work by Sigal~\cite{Sigal}, a recent result by Bourgain~\cite{Bourgain-18} (in the nearly-optimal version due to Kim and the third author~\cite{Lemm-18}) solves this problem in the model framework of discrete equations with iid coefficients, in the perturbative regime of a small ellipticity contrast.

\begin{theor}[\cite{Bourgain-18,Lemm-18}]\label{th:Bmain}
Let $d\ge3$.
Consider a random coefficient field $\Aa^\delta$ on $\Z^d$ given by
$\Aa^\delta(x)=\Id-\delta\,\bb(x)$,
where $\delta>0$ and where $\{\bb(x)\}_{x\in\Z^d}$ is a family of real-valued iid random matrices with $|\bb(x)|\le1$. For all $f\in\Ld^2(\Z^d)^d$ and $\omega\in\Omega$, we denote by $u_f^\delta(\cdot,\omega)\in\dot H^1(\Z^d)$ the unique Lax-Milgram solution of the following discrete equation in $\Z^d$,\footnote{In this statement, $\nabla$ denotes the discrete gradient, defined componentwise by $\nabla_ju(x):=u(x+\ee_j)-u(x)$ for the $j$-th standard unit vector $\ee_j$, and $-\nabla^*\cdot$ is its formal adjoint.}
\[-\nabla^*\cdot\Aa^\delta(\cdot,\omega)\nabla u_f^\delta(\cdot,\omega)=\nabla^*\cdot f,\]
and we consider the convolution operator $B^\delta$ on $\Ld^2(\Z^d)$ such that for all $f\in\Ld^2(\Z^d)^d$ the averaged solution $\expecm{u_{f}^\delta}\in\dot H^1(\Z^d)$ is the unique Lax-Milgram solution of
\[B^\delta\,\expecm{u_{f}^\delta}=\nabla^*\cdot f.\]
Then $B^\delta$ can be written as $B^\delta=-\nabla^* \cdot B_0^\delta\nabla$ for some convolution operator $B_0^\delta$ on $\Ld^2(\Z^d)^d$ and there is a universal constant $C_d>0$ such that
for all $0<\delta<\tfrac1{C_d}$ the Fourier symbol $\hat B_0^\delta$ is of Hölder class $C^{2d-C_d\delta}$.
\end{theor}

A natural conjecture concerns the same regularity for the symbol $\hat B$ beyond the small ellipticity ratio regime $\delta\ll1$ and under general mixing conditions (rather than in the iid case). We focus for simplicity on the continuum setting.

\begin{conj}[Bourgain \& Spencer]\label{conj:main}
Under suitable mixing conditions on the random coefficient field $\Aa$, the operator $B$ defined in Lemma~\ref{lem:main} can be written as $B=-\nabla\cdot B_0\nabla$ for some convolution operator $B_0$ on $\Ld^2(\R^d)^d$ such that the Fourier symbol $\hat B_0$ is of Hölder class $C^{2d-\eta}$ at the origin for all $\eta>0$.
\end{conj}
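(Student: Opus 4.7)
The plan is to adapt Bourgain's harmonic analytic strategy from the iid perturbative discrete setting to the continuum under general mixing, and to couple it with tools from quantitative stochastic homogenization in order to reach ellipticity contrast of order one. First, starting from Lemma~\ref{lem:main}, since $\lambda|\xi|^2\le\hat B(\xi)\le|\xi|^2$ the ratio $\hat B(\xi)/|\xi|^2$ is bounded, so one can single out a canonical symmetric matrix-valued Fourier symbol $\hat B_0(\xi)$ satisfying $\xi\cdot\hat B_0(\xi)\xi=\hat B(\xi)$ (for instance by letting $B_0$ act trivially on the orthogonal complement of $\xi$), thereby reducing the conjecture to a statement on the regularity of $\hat B_0$ at the origin.

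The core of the argument would be a cumulant/cluster expansion. Writing $\Aa=\bar\Aa+\Aa^\circ$ with $\bar\Aa:=\expec{\Aa}$ constant and $\Aa^\circ$ centered, I would formally expand the resolvent
\[(\nabla\cdot\Aa\nabla)^{-1}=(\nabla\cdot\bar\Aa\nabla)^{-1}\sum_{n\ge0}\bigl(-\nabla\cdot\Aa^\circ\nabla(\nabla\cdot\bar\Aa\nabla)^{-1}\bigr)^n\]
and take the expectation termwise. Under the mixing assumption, each expectation reorganizes into a sum over connected clusters of points weighted by joint cumulants of $\Aa^\circ$; the Fourier symbol of the $n$-th cluster contribution to $\hat B_0(\xi)$ is then an iterated integral of products of the gradient Green kernel of $\bar\Aa$ (of order $|\cdot|^{-1}$ in Fourier) against cumulant kernels that decay in position space thanks to mixing. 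Extracting $C^{2d-\eta}$ regularity at $\xi=0$ is then a power-counting problem on these Feynman-type integrals: a careful analysis in the spirit of Bourgain's proof should show that successive differentiations in $\xi$ enjoy the same smoothness budget as in the iid case, with the same logarithmic borderline appearing precisely at order $2d$.

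The main obstacle is the non-perturbative regime: for ellipticity contrast of order one the Neumann series does not converge absolutely, and the cluster expansion must be controlled by non-perturbative inputs. A natural route is to replace the bare Green function $(\nabla\cdot\bar\Aa\nabla)^{-1}$ by its large-scale renormalized counterpart from the quantitative stochastic homogenization programme (Armstrong--Kuusi--Mourrat, Gloria--Neukamm--Otto), using moment bounds on higher-order correctors to keep each cluster integral finite while preserving the $2d$ smoothness count. A secondary delicate point is the sharpness of the $2d-\eta$ threshold, which requires isolating the leading oscillatory contribution of each cluster and matching it against the fluctuation-type correction discussed in Section~\ref{sec:fluc}, thereby ruling out hidden cancellations that would push the regularity strictly beyond $C^{2d}$.
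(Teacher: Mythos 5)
You are addressing Conjecture~\ref{conj:main}, which the paper does not prove: it is stated as an open conjecture (its investigation is explicitly postponed to future work), and the only proven instance is Theorem~\ref{th:Bmain}, i.e.\ the discrete iid case at small ellipticity contrast $\delta$, where the exponent obtained is $2d-C_d\delta$ rather than $2d-\eta$ for every $\eta>0$. Your text is therefore a research programme rather than a proof, and its two pivotal steps are exactly the open points. First, the cumulant/cluster expansion rests on the Neumann series for $(\nabla\cdot\Aa\nabla)^{-1}$ around $(\nabla\cdot\bar\Aa\nabla)^{-1}$; at ellipticity contrast of order one this series does not converge, so ``taking the expectation termwise'' is not justified and the ensuing Feynman-type power counting is purely formal. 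This is precisely why Bourgain's argument, which you propose to adapt, is confined to the perturbative regime $\delta\ll1$: there the series converges geometrically and the harmonic-analytic effort goes into controlling the growth in $n$ of the H\"older norms of the $n$th term; none of that control is available at contrast of order one.

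Second, your proposed non-perturbative substitute --- replacing the bare Green function by a ``renormalized'' one via inputs from quantitative stochastic homogenization --- collides with the obstruction the paper itself emphasizes: under integrable correlations the stationary correctors $(\varphi^n,\sigma^n)$ exist in $\Ld^2(\Omega)$ only for $n<\lceil\frac d2\rceil$ (Lemma~\ref{lem:gaus}), so the classical corrector theory yields accuracy $O(\e^{d/2})$ only (Proposition~\ref{prop:cor-rand}), far short of the order-$2d$ budget your power counting requires; and by the equivalence of Proposition~\ref{prop:main-gen}, closing that counting at order $2d$ is the same statement as the conjecture, so this step is circular as written. Two further remarks: the sharpness of the $2d$ threshold is not part of the conjecture, so the discussion of ``ruling out hidden cancellations'' is not needed; and your reduction to a matrix-valued symbol $\hat B_0$ with $\xi\cdot\hat B_0(\xi)\xi=\hat B(\xi)$ is unobjectionable and consistent with the structure $\hat H(\xi)=\hat G(\xi)\,\xi\otimes\xi$ extracted in Appendix~\ref{app}, but it is the easy part of the problem.
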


In the sequel, we discuss how such a regularity result is to be interpreted in the framework of homogenization: a higher regularity of $\hat B$ at the origin is equivalent to obtaining a higher-order approximation
of the averaged solution operator $\expec{\nabla u_{\e,f}}$ in the homogenization regime.
In particular, we show that the derivatives of the symbol $\hat B$ at the origin provide an alternative definition of the (symmetrized) higher-order homogenized coefficients.
While the classical ($\Ld^2$-based) corrector theory in stochastic homogenization only allows to define homogenized coefficients up to order $\le\lceil\frac d2\rceil$, the above conjectured regularity of $\hat B$ would allow to proceed up to order $\le2d$. This comes along with a higher-order description of the averaged solution beyond the accuracy allowed by the classical corrector theory.
In fact, while the classical corrector theory is optimal in view of the strong effective approximation of the solution operator in $\Ld^2(\R^d\times\Omega)$, the results described here beg for the development of a novel higher-order corrector theory in a weak sense in probability. This shares some close connection with results in~\cite{DS1}, and the investigation of Conjecture~\ref{conj:main} in this spirit is postponed to a future work.

To further illustrate the above relation between homogenized coefficients and regularity of $\hat B_0$, we 
also consider the case of a periodic coefficient field~$\Aa$: we then prove that $\hat B_0$ is analytic at the origin, which is equivalent to the well-known existence and exponential boundedness of all homogenized coefficients.

\section{Regularity of $\hat B_0$ and homogenization}

In this section, we establish the following general result stating the equivalence between the regularity of the symbol $\hat B_0$ at the origin and the higher-order description of the averaged solution operator in the homogenization regime.
Note that only the {\it symmetrized} higher-order homogenized coefficients are characterized.
In what follows  $(\cdot)^T$ stands for matrix transposition.

\begin{prop}\label{prop:main-gen}
Let $d\ge1$. Given regularity exponents $\ell\in\N$ and $0<\eta<1$, the following two properties are equivalent:
\begin{enumerate}[\qquad\emph{(I)}]
\item[\emph{(I)}]
The operator $B$ defined in Lemma~\ref{lem:main} can be written as $B=-\nabla\cdot B_0\nabla$ for some convolution operator $B_0$ on $\Ld^2(\R^d)^d$ such that the Fourier symbol $\hat B_0$ is of Hölder class $C^{\ell-\eta}$ at the origin.\footnote{This is understood in the sense of $\big|\hat B_{0}(\xi)-\sum_{\alpha\in\N^d\atop|\alpha|\le\ell-1}\frac{\nabla^\alpha\hat B_{0}(0)}{\alpha!}\,\xi^{\alpha}\big|\le C_\ell |\xi|^{\ell-\eta}$ for all $\xi\in\R^d$.}
\item[\emph{(II)}]
There exist  ``higher-order homogenized coefficients'' $(\bar\Aa^n)_{1\le n\le\ell}$ with $\lambda\Id\le\frac12(\bar\Aa^1+(\bar \Aa^1)^T)\le\frac1\lambda\Id$ and with the following property:
For all $\e>0$, $f\in\Ld^2(\R^d)^d$, and $\omega\in\Omega$, defining $u_{\e,f}(\cdot,\omega)\in\dot H^1(\R^d)$ as the unique Lax-Milgram solution of the following rescaled elliptic PDE in $\R^d$,
\begin{align}\label{eq:ueps}
\qquad-\nabla\cdot\Aa(\tfrac\cdot\e,\omega)\nabla u_{\e,f}(\cdot,\omega)=\nabla\cdot f,
\end{align}
and defining the ``$\ell$th-order homogenized solution'' $\bar u_{\e,f}^\ell:=\sum_{n=1}^\ell\e^{n-1}\tilde u_f^n$ where $\tilde u_f^1\in\dot H^1(\R^d)$ denotes the unique Lax-Milgram solution of
\begin{equation}\label{ant1}
\qquad-\nabla\cdot\bar\Aa^1\nabla\tilde u_f^1=\nabla\cdot f,
\end{equation}
and where for $2\le n\le\ell$ we inductively define $\tilde u_f^n$ as the unique Lax-Milgram solution of 
\begin{equation}\label{ant2}
\qquad-\nabla\cdot\bar\Aa^1\nabla\tilde u^n_f=\nabla\cdot\Big(\sum_{k=2}^{n}\bar\Aa^k_{i_1\ldots i_{k-1}}\nabla\nabla^{k-1}_{i_1\ldots i_{k-1}}\tilde u_f^{n+1-k}\Big),
\end{equation}
(with the Einstein summation convention on the repeated indices $i_1,\dots,i_{k-1}$)
there holds
\begin{align}\label{eq:est-gen-aver}
\qquad\big\|\nabla (\expec{u_{\e,f}}-\bar u_{\e,f}^\ell)\big\|_{\Ld^2(\R^d)}\,\le\,\e^{\ell-\eta}\,C_\ell\,\|\langle\nabla\rangle^{2\ell-1}f\|_{\Ld^2(\R^d)},
\end{align}
for some constant $C_\ell$ only depending on $d,\lambda,\ell$, where $\langle\nabla\rangle$ has Fourier multiplier $\sqrt{1+|\xi|^2}$.
\end{enumerate}
The (symmetrized) higher-order homogenized coefficients are then related to the derivatives of the symbol~$\hat B_0$ at the origin via the following formulas: for all $1\le n\le\ell$,
\begin{equation}\label{eq:link-B-a}
\qquad\sum_{\alpha\in\N^d\atop|\alpha|=n-1}\frac{\nabla^\alpha\hat B_0(0)}{i^\alpha \alpha!}\,\xi^{\alpha}\,=\,\tfrac12\big(\bar\Aa_{i_1\ldots i_{n-1}}^{n}+(\bar\Aa_{i_1\ldots i_{n-1}}^{n})^T\big)\,\xi_{i_1}\ldots\xi_{i_{n-1}},
\end{equation}
an identity between square symmetric matrices.
In addition, \emph{(I)} holds with $\hat B_0$ analytic at the origin if and only if \emph{(II)} holds for all $\ell\ge1$ with $C_\ell=C^\ell$ in~\eqref{eq:est-gen-aver} and with $|\bar\Aa^\ell|\le C^\ell$, for some constant $C$ only depending on $d,\lambda$.
\end{prop}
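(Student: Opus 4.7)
The approach is to translate the entire statement to Fourier space, where it becomes algebraic. Stationarity, the rescaling $u_{\e,f}(x)=\e\,w(x/\e)$, and Lemma~\ref{lem:main} applied to the unscaled equation give the explicit formula $\widehat{\expec{u_{\e,f}}}(\xi)=i\xi\cdot\hat f(\xi)/G(\e,\xi)$ with $G(\e,\xi):=\xi^T\hat B_0(\e\xi)\xi$, which satisfies $\lambda|\xi|^2\le G(\e,\xi)\le|\xi|^2$ by the operator bound on $B$. An induction on $n$ applied to the recursion~\eqref{ant2} yields $\widehat{\tilde u_f^n}(\xi)=r_n(\xi)\,\xi\cdot\hat f(\xi)$ for a scalar rational function $r_n$ of $\xi$ that depends only on the symmetric parts of $\bar\Aa^1,\dots,\bar\Aa^n$; matching of Taylor coefficients then shows that the identification~\eqref{eq:link-B-a} is precisely the condition under which the partial sum $\sum_{n=1}^\ell\e^{n-1}r_n(\xi)$ coincides with the $(\ell-1)$-st Taylor polynomial in $\e$ at $\e=0$ of $i/G(\e,\xi)$. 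The two sides of the equivalence thus both translate into statements about the Taylor remainder of $1/G$ in $\e$: (I) is a pointwise bound, and (II) a weighted $\Ld^2$-bound against $\xi\otimes\xi\,\hat f(\xi)$.

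For (I)$\Rightarrow$(II), I would set $\bar\Aa^n$ according to~\eqref{eq:link-B-a} (the antisymmetric parts being free), verify ellipticity of $\bar\Aa^1=\hat B_0(0)$ from the operator bound on $B$ evaluated in the limit $\xi\to 0$, and then split the Fourier integral at the natural scale $\e|\xi|=1$. On $\{\e|\xi|\le 1\}$, the H\"older hypothesis yields $|G(\e,\xi)-(\text{its Taylor polynomial in }\e)|\le C\e^{\ell-\eta}|\xi|^{\ell-\eta+2}$, and combined with $G\ge\lambda|\xi|^2$ a geometric-series expansion transfers this to a bound of order $\e^{\ell-\eta}|\xi|^{\ell-\eta-2}$ on the Taylor remainder of $1/G$ in $\e$. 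On $\{\e|\xi|\ge 1\}$ only trivial bounds apply: ellipticity gives $|1/G|\le 1/(\lambda|\xi|^2)$, and the explicit homogeneity degree $n-3$ of $r_n$ yields $|P_\ell(\e,\xi)|\le C_\ell(\e|\xi|)^{\ell-1}/|\xi|^2$, which in that region is dominated by $C\e^{\ell-\eta}|\xi|^{\ell-\eta-2}(\e|\xi|)^{1-\eta}$. Both regions then integrate within the desired rate $\e^{2(\ell-\eta)}\|\langle\nabla\rangle^{2\ell-1}f\|_{\Ld^2}^2$.

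For (II)$\Rightarrow$(I), I would test~\eqref{eq:est-gen-aver} against an approximate point mass $\hat f_\delta(\xi)=\delta^{-d/2}\phi((\xi-\xi_0)/\delta)\,v$ concentrated at $\xi_0\in\R^d\setminus\{0\}$ in direction $v\in\R^d$, and pass to the limit $\delta\to 0$ using the continuity of $G(\e,\cdot)$ (inherited from the continuity of $\hat B_0$) to extract the pointwise bound
\[\bigl|i/G(\e,\xi_0)-P_\ell(\e,\xi_0)\bigr|\le C\e^{\ell-\eta}(1+|\xi_0|)^{2\ell-1}/|\xi_0|^2,\]
where $P_\ell(\e,\xi_0)=\sum_{n=1}^\ell\e^{n-1}r_n(\xi_0)$ is a polynomial of degree $\le\ell-1$ in $\e$. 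Uniqueness of polynomial approximation at that order forces $P_\ell(\cdot,\xi_0)$ to agree with the Taylor polynomial of $1/G(\cdot,\xi_0)$ at $\e=0$, and the bound then says that $\xi_0^T\hat B_0(\mu)\xi_0$ is in the H\"older class $C^{\ell-\eta}$ at $\mu=0$ along every ray. Polarization and uniformity in $\xi_0$ upgrade this to $C^{\ell-\eta}$-regularity of the symmetric part of $\hat B_0$ at the origin; choosing $\hat B_0$ symmetric (harmless since only the symmetric part enters $\hat B$) yields~(I). The analytic case is then a direct limit: constants of the form $C^\ell$ in~\eqref{eq:est-gen-aver} correspond to a uniform radius of convergence for the Taylor series of $\hat B_0$ at the origin, and conversely.

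The main technical difficulty lies in the outer region $\{\e|\xi|\ge 1\}$ of the forward direction, where the local H\"older information on $\hat B_0$ at the origin is unavailable and one must rely solely on global ellipticity together with polynomial control on the coefficients $r_n$. Verifying that the resulting outer-region bound lines up with the inner-region H\"older bound within a Sobolev weight of exponent exactly $2\ell-1$---which is also the sharp exponent needed for $\bar u_{\e,f}^\ell$ to make sense in $\dot H^1$ via the recursion~\eqref{ant2}---is the most delicate point of the argument.
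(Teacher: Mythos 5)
Your proposal is correct and follows essentially the same route as the paper: both directions are Fourier-multiplier comparisons resting on the rescaling identity $\hat B_{\e,0}(\xi)=\hat B_0(\e\xi)$, the ellipticity bounds $\lambda|\xi|^2\le\xi\cdot\hat B_0(\e\xi)\xi\le|\xi|^2$, the exact Fourier representation of $\bar u^\ell_{\e,f}$ through the recursion \eqref{ant1}--\eqref{ant2}, and, for (II)$\Rightarrow$(I), the arbitrariness of $f$ to upgrade the $\Ld^2$ estimate to a pointwise (a.e.) bound on the symbol difference --- which is exactly the paper's duality step, realized in your version by testing with approximate point masses. The only cosmetic difference is that the paper's footnote takes the H\"older bound on $\hat B_0$ to hold globally in $\xi$, so the paper's energy estimate needs no frequency splitting, whereas you treat the regularity as purely local at the origin and handle the region $\e|\xi|\ge1$ by crude ellipticity and homogeneity bounds; your "geometric-series" transfer from the remainder of $G$ to that of $1/G$ should be phrased via the identity $1/G-P_\ell=(1-GP_\ell)/G$ rather than an infinite series (which need not converge for small $\lambda$), but this is a presentational point, not a gap.
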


\begin{rem}\label{rem:homog-eq}
While standard two-scale expansion techniques would rather suggest to define the $\ell$th-order homogenized solution as satisfying
\[-\nabla\cdot\Big(\sum_{k=1}^\ell\e^{k-1}\bar\Aa^k_{i_1\ldots i_{k-1}}\nabla^{k-1}_{i_1\ldots i_{k-1}}\Big)\nabla\bar U_{\e,f}^\ell=\nabla\cdot f,\]
we note that this equation is ill-posed in general (the Fourier symbol of the differential operator may vanish) and the above definition of $\bar u_{\e,f}^\ell$ precisely provides a well-defined proxy (cf.\@ also~\cite{KMS-06}).
\end{rem}

\begin{proof}[Proof of Proposition~\ref{prop:main-gen}]
We split the proof into two steps, first showing that (I) implies~(II) and then turning to the converse.

\medskip
\step1 (I) implies (II).
\nopagebreak

\noindent
Given $B_0$ as in~(I), let the (symmetrized) coefficients~$(\bar\Aa^n)_{n=1}^\ell$ be defined by~\eqref{eq:link-B-a} and let $\bar u_{\e,f}^\ell$ be as in property~(II).
We first examine the equation satisfied by $\bar u_{\e,f}^\ell$.
Summing the defining equations for $(\tilde u_f^n)_{1\le n\le\ell}$, we find
\[-\nabla\cdot\bar\Aa^1\nabla\bar u^\ell_{\e,f}=\nabla\cdot f+\nabla\cdot\bigg(\sum_{k=2}^\ell\bar\Aa^k_{i_1\ldots i_{k-1}}\nabla\nabla^{k-1}_{i_1\ldots i_{k-1}}\sum_{n=k}^\ell\e^{n-1}\tilde u_f^{n+1-k}\bigg),\]
or equivalently, reorganizing this identity,
\begin{multline}\label{eq:barun}
-\nabla\cdot\Big(\sum_{k=1}^\ell\e^{k-1}\bar\Aa^k_{i_1\ldots i_{k-1}}\nabla^{k-1}_{i_1\ldots i_{k-1}}\Big)\nabla\bar u^\ell_{\e,f}\\
=\nabla\cdot f-\nabla\cdot\bigg(\sum_{k=2}^\ell\e^{k-1}\bar\Aa^k_{i_1\ldots i_{k-1}}\nabla\nabla^{k-1}_{i_1\ldots i_{k-1}}\sum_{n=\ell+2-k}^\ell\e^{n-1}\tilde u_f^{n}\bigg).
\end{multline}
By $\e$-rescaling, $\expec{u_{\e,f}}$ satisfies $-\nabla\cdot B_{\e,0}\nabla\expec{u_{\e,f}}=\nabla\cdot f$ with symbol 
$$
\hat B_{\e,0}(\xi)=\hat B_0(\e\xi)
$$
(this crucial identity reflects that homogenization takes place at large scales, or equivalently, 
 at low frequencies).
Injecting this into the above and using the definition~\eqref{eq:link-B-a} of the coefficients $(\bar\Aa^n)_{1\le n\le\ell}$, we obtain
\begin{multline*}
-\nabla\cdot B_{\e,0}\nabla(\expec{u_{\e,f}}-\bar u_{\e,f}^\ell)=
\nabla\cdot\bigg(B_{\e,0}-\sum_{\alpha\in\N^d\atop|\alpha|\le\ell-1}\frac{\nabla^\alpha\hat B_{\e,0}(0)}{i^\alpha \alpha!}\,\nabla^{\alpha}\bigg)\nabla\bar u^\ell_{\e,f}\\
+\nabla\cdot\bigg(\sum_{k=2}^\ell\e^{k-1}\bar\Aa^k_{i_1\ldots i_{k-1}}\nabla\nabla^{k-1}_{i_1\ldots i_{k-1}}\sum_{n=\ell+2-k}^\ell\e^{n-1}\tilde u_f^{n}\bigg).
\end{multline*}
Using the regularity of $\hat B_0$ (cf.~(I)) in the form
\begin{equation*}
\bigg|\hat B_{\e,0}(\xi)-\sum_{\alpha\in\N^d\atop|\alpha|\le\ell-1}\frac{ \nabla^\alpha\hat B_{\e,0}(0)}{\alpha!}\,\xi^{\alpha}\bigg|\le(\e|\xi|)^{\ell-\eta}C_\ell,
\end{equation*}
for some constant $C_\ell$, an energy estimate then yields
\begin{align*}
\big\|\nabla(\expec{u_{\e,f}}-\bar u_{\e,f}^\ell)\big\|_{\Ld^2(\R^d)}\le \e^{\ell-\eta}\,C_\ell\sum_{n=1}^\ell\|\langle\nabla\rangle^{\ell}\nabla\tilde u^n_{f}\|_{\Ld^2(\R^d)}\le \e^{\ell-\eta}\,C_\ell\,\|\langle\nabla\rangle^{2\ell-1}f\|_{\Ld^2(\R^d)},
\end{align*}
and property~(II) follows for some other constant $C_\ell$.

\medskip
\step2 (II) implies (I).

\nopagebreak 
\noindent
In this part of the proof, we use the following slight abuse of notation: $C_\ell$ denotes a constant that 
might differ from that of the assumption (II) by a multiplicative factor that only depends on $d$ and on the ellipticity contrast $\lambda$
--- in particular, it may change from line to line.
For all $\xi \in \R^d$, set 
\begin{equation}\label{add2}
\hat B_{0,\e}^\ell(\xi):=\sum_{k=1}^\ell\e^{k-1}\bar\Aa^k_{i_1\ldots i_{k-1}}\,(i\xi_{i_1})\ldots(i\xi_{i_{k-1}}).
\end{equation}
For $\e\le\frac1{C_\ell}$ small enough, it follows from the bound $\lambda\Id\le \bar\Aa^1\le\frac1\lambda\Id$ together with
the finiteness of the $\bar \Aa^k$'s that for all $|\xi|\le1$ and $|e|=1$,
\begin{align}\label{eq:Re-sumbar}
\tfrac\lambda2~\le~|e\cdot \hat B_{0,\e}^\ell(\xi)e|~\le~\tfrac2\lambda.
\end{align}
Equation~\eqref{eq:barun} for $\bar u^\ell_{\e,f}$ can then be inverted in Fourier space:
the Fourier transform $\F\nabla\bar u^\ell_{\e,f}$ of $\nabla\bar u^\ell_{\e,f}$ is given by
\begin{multline*}
(-\F\nabla\bar u^\ell_{\e,f})(\xi)=\frac{\xi\otimes\xi}{\xi\cdot\hat B_{0,\e}^\ell(\xi)\xi}\\
\cdot\bigg(\hat f(\xi)+\sum_{k=2}^\ell\e^{k-1}\bar\Aa^k_{i_1\ldots i_{k-1}}(i\xi_{i_1})\ldots(i\xi_{i_{k-1}})\sum_{n=\ell+2-k}^{\ell}\e^{n-1}(-\F\nabla\tilde u^n_f)(\xi)\bigg).
\end{multline*}
Using \eqref{ant1} and \eqref{ant2} in Fourier space to express $\F\nabla\tilde u^n_f(\xi)$ in terms of $\hat f(\xi)$, this yields
\begin{align*}
\bigg|(-\F\nabla\bar u^\ell_{\e,f})(\xi)-\frac{\xi\otimes\xi}{\xi\cdot\hat B_{0,\e}^\ell(\xi)\xi}\cdot\hat f(\xi)\bigg|
\,\le\, \e^\ell C_\ell\langle\xi\rangle^{2\ell-2}|\hat f(\xi)|.
\end{align*}
Combining the latter with~\eqref{eq:est-gen-aver} and with the equation $-\nabla\cdot B_{\e,0}\nabla\expec{u_{\e,f}}=\nabla\cdot f$ with symbol $\hat B_{\e,0}(\xi)=\hat B_0(\e\xi)$, we obtain
\begin{align*}
\Bigg(\int_{\R^d}\bigg|\Big(\frac{\xi \otimes\xi}{\xi\cdot \hat B_0(\e\xi)\xi}-\frac{\xi \otimes\xi}{\xi\cdot\hat B_{0,\e}^\ell(\xi)\xi}\Big)\cdot\hat f(\xi)\bigg|^2d\xi\Bigg)^\frac12
\,\le\, \e^{\ell-\eta} C_\ell\|\langle\nabla\rangle^{2\ell-1}f\|_{\Ld^2(\R^d)}.
\end{align*}
By~\eqref{eq:Re-sumbar} and the a priori bound $\hat B(\xi)\le|\xi|^2$, 
we reformulate the integrand for all $|\xi|\le 1$ as
\begin{eqnarray*}
\lefteqn{\bigg|\Big(\frac{\xi \otimes\xi}{\xi\cdot \hat B_0(\e\xi)\xi}-\frac{\xi \otimes\xi}{\xi\cdot\hat B_{0,\e}^\ell(\xi)\xi}\Big)\cdot\hat f(\xi)\bigg|^2}
\\
&=&|\xi|^4 \Big|\hat f(\xi)\cdot \frac{\xi}{|\xi|}\Big|^2\frac{\big|\xi\cdot \hat B_0(\e\xi)\xi-\xi\cdot\hat B_{0,\e}^\ell(\xi)\xi\big|^2}{\big|\xi \cdot \hat B_0(\e\xi)\xi\big|^2\big|\xi\cdot\hat B_{0,\e}^\ell(\xi)\xi\big|^2}
\\
&\ge& \frac{\lambda^2}{4}\Big|\hat f(\xi)\cdot \frac{\xi}{|\xi|}\Big|^2\big|\xi \cdot \hat B_0(\e\xi)\xi -\xi\cdot\hat B_{0,\e}^\ell(\xi)\xi\big|^2.
\end{eqnarray*}
Since the function $f\in\Ld^2(\R^d)^d$ is arbitrary, we deduce for almost all $|\xi|\le1$,
\begin{align}\label{add3}
\big|\xi \cdot \hat B_0(\e\xi)\xi-\xi\cdot\hat B_{0,\e}^\ell(\xi)\xi\big|
\,\le\, \e^{\ell-\eta} C_\ell,
\end{align}
and property~(I) follows.
Finally, the formula  \eqref{eq:link-B-a} follows from the combination of  \eqref{add2} and \eqref{add3}.
\end{proof}

\section{Periodic setting}
In this section, we consider the particular case of a periodic coefficient field $\Aa$ on $\R^d$ satisfying the boundedness and ellipticity properties~\eqref{eq:unif-ell}.
More precisely, we consider the ensemble of coefficient fields $\{\Aa(\cdot+z)\}_{z\in Q}$, where $Q=[-\frac12,\frac12)^d$ is the periodicity cell, and the ensemble average is then with respect to the Lebesgue measure for translations~$z\in Q$. The probability space $(\Omega,\Pm)$ in the introduction thus reduces to the cell $Q$ endowed with the Lebesgue measure.
In this setting, using the classical corrector theory, we show that property~(II) in Proposition~\ref{prop:main-gen} is satisfied for all $\ell\ge1$. In terms of the symbol $\hat B$, our main result then takes on the following guise.

\begin{theor}\label{th:per}
Let $\Aa$ be periodic.
The operator $B$ defined in Lemma~\ref{lem:main} can be written as $B=-\nabla\cdot B_0\nabla$ for some convolution operator $B_0$ on $\Ld^2(\R^d)^d$ such that the Fourier symbol $\hat B_0$ is analytic in a neighborhood of the origin. In addition, for all $n\ge1$, the usual $n$th-order homogenized coefficients $\bar \Aa^{n}$ (cf.~\eqref{e.cor-2}) is related to the $(n-1)$th gradient $\nabla^{n-1}\hat B_0(0)$ via formula~\eqref{eq:link-B-a}.
\end{theor}

We start with recalling the classical inductive definition of the higher-order correctors $(\varphi^n)_{n\ge0}$, homogenized coefficients $(\bar\Aa^n)_{n\ge1}$, fluxes $(q^n)_{n\ge1}$, and flux correctors $(\sigma^n)_{n\ge0}$ in periodic homogenization (cf.~\cite{BLP-78,JKO94}).

\begin{enumerate}[$\bullet$]
\item $\varphi^0 \equiv 1$ and for all $n\ge1$ we define $\varphi^n:=(\varphi^n_{i_1\ldots i_n})_{1\le i_1,\ldots,i_n\le d}$ with $\varphi^n_{i_1\ldots i_n}\in\Ld^2_\per(Q)$ the periodic scalar field satisfying
\begin{gather}\label{e.cor-1}
-\nabla\cdot\Aa\nabla\varphi^n_{i_1\ldots i_n}=\nabla\cdot\big((\Aa\varphi^{n-1}_{i_1\ldots i_{n-1}}-\sigma^{n-1}_{i_1\ldots i_{n-1}})\,\ee_{i_n}\big),
\end{gather}
with $\int_Q\varphi^n_{i_1\ldots i_n}=0$.
\item For all $n\ge1$ we define $\bar\Aa^n:=(\bar\Aa^n_{i_1\ldots i_{n-1}})_{1\le i_1,\ldots,i_{n-1}\le d}$ with $\bar\Aa^n_{i_1\ldots i_{n-1}}\in\R^{d\times d}$ given by
\begin{equation}\label{e.cor-2}
\bar\Aa^n_{i_1\ldots i_{n-1}}\ee_j:=\int_Q\Aa\big(\nabla\varphi^{n}_{i_1\ldots i_{n-1}j}+\varphi^{n-1}_{i_1\ldots i_{n-1}}\ee_{j}\big).
\end{equation}
\item For all $n\ge1$ we define $q^n:=(q^n_{i_1\ldots i_n})_{1\le i_1,\ldots,i_n\le d}$ with $q^n_{i_1\ldots i_n}\in\Ld^2_\per(Q)^d$ the periodic vector field given by
\begin{equation}\label{e.cor-3}
q^n_{i_1\ldots i_n}:=\Aa\nabla\varphi^{n}_{i_1\ldots i_n}+(\Aa\varphi^{n-1}_{i_1\ldots i_{n-1}}-\sigma^{n-1}_{i_1\ldots i_{n-1}})\,\ee_{i_n}-\bar\Aa_{i_1\ldots i_{n-1}}^{n}\ee_{i_n},
\end{equation}
where the definition of $\bar\Aa^{n}$ ensures $\int_Qq^n=0$.
\item $\sigma^0\equiv 0$ and for all $1\le n\le\ell$ we define $\sigma^n:=(\sigma^n_{i_1\ldots i_n})_{i_1,\ldots,i_n=1}^d$ with $\sigma^n_{i_1\ldots i_n}\in\Ld^2_\per(Q)^{d\times d}$ the periodic skew-symmetric matrix field satisfying
\begin{gather}\label{e.cor-4}
-\triangle\sigma^n_{i_1\ldots i_n}=\nabla\times q^n_{i_1\ldots i_n},\qquad\nabla\cdot\sigma^n_{i_1\ldots i_n}=q^n_{i_1\ldots i_n},
\end{gather}
with $\int_Q\sigma^n_{i_1\ldots i_n}=0$, with the notation $(\nabla\times X)_{ij}:=\nabla_iX_j-\nabla_jX_i$ for a vector field $X$ and with the notation $(\nabla\cdot Y)_i:=\nabla_jY_{ij}$ for a matrix field $Y$.
\end{enumerate}

\medskip
An iterative use of the Poincaré inequality on $Q$ yields the following, which ensures the well-posedness of the above objects and provides a priori bounds.

\begin{lem}[Periodic correctors]\label{lem:cor-per}
Let $d\ge1$ and let the coefficient field $\Aa$ be periodic and satisfy~\eqref{eq:unif-ell}.
Then the above collections $(\varphi^n,\sigma^n)_{n\ge0}$, $(\bar\Aa^n)_{n\ge1}$, and $(q^n)_{n\ge1}$ are uniquely defined and satisfy for all $n\ge1$,
\begin{gather*}
\|(\varphi^n,\sigma^n)\|_{\Ld^2(Q)}+|\bar\Aa^n|+\|q^n\|_{\Ld^2(Q)}\,\le\,C^n,
\end{gather*}
where the constant $C$ depends only on $d,\lambda$.
\end{lem}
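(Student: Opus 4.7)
The plan is a direct induction on $n\ge 0$, with trivial base case $\varphi^0\equiv 1$ and $\sigma^0\equiv 0$ satisfying the stated bounds. At each step I would treat the four quantities $\varphi^n$, $\bar\Aa^n$, $q^n$, $\sigma^n$ in that order, controlling each in $\Ld^2(Q)$ by a multiplicative constant $C=C(d,\lambda)$ times the bounds on the previous level; the geometric growth $C^n$ then follows immediately.

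First I would handle~\eqref{e.cor-1}: its right-hand side is the divergence of a zero-mean $Q$-periodic $\Ld^2$ field, so Lax--Milgram on the Hilbert space $\{u\in H^1_\per(Q):\int_Q u=0\}$ together with the ellipticity~\eqref{eq:unif-ell} yields a unique mean-zero solution $\varphi^n_{i_1\ldots i_n}$. A standard energy estimate bounds $\|\nabla\varphi^n\|_{\Ld^2(Q)}$ by $\|\Aa\varphi^{n-1}-\sigma^{n-1}\|_{\Ld^2(Q)}$, and Poincar\'e's inequality on $Q$ transfers this to $\|\varphi^n\|_{\Ld^2(Q)}$. The homogenized coefficient $\bar\Aa^n$ in~\eqref{e.cor-2} and the flux $q^n$ in~\eqref{e.cor-3} are then directly controlled by Cauchy--Schwarz in terms of $\|\nabla\varphi^n\|_{\Ld^2(Q)}$, $\|\varphi^{n-1}\|_{\Ld^2(Q)}$, $\|\sigma^{n-1}\|_{\Ld^2(Q)}$, and $|\bar\Aa^n|$, while the mean-zero property $\int_Q q^n=0$ is built into the very definition of $\bar\Aa^n$.

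The only nontrivial step is the construction of $\sigma^n$ solving the \emph{overdetermined} system~\eqref{e.cor-4}. The key observation I would use is that $q^n$ is \emph{divergence-free}: taking the divergence of~\eqref{e.cor-3}, invoking equation~\eqref{e.cor-1} for $\varphi^n$, and using that $\bar\Aa^n$ is a constant matrix, one finds $\nabla\cdot q^n=0$. Combined with $\int_Q q^n=0$, this lets me define $\sigma^n$ on the Fourier side by $\widehat{\sigma^n_{jk}}(\xi):=|\xi|^{-2}(i\xi_j\widehat{q^n_k}(\xi)-i\xi_k\widehat{q^n_j}(\xi))$ for $\xi\ne 0$ and zero at $\xi=0$, which automatically produces a periodic, skew-symmetric, mean-zero $\Ld^2$ field. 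A direct Fourier computation then confirms both equations in~\eqref{e.cor-4}, with the relation $\xi\cdot\widehat{q^n}(\xi)\equiv 0$ being precisely what is needed to recover $\nabla\cdot\sigma^n=q^n$ from the equation $-\triangle\sigma^n=\nabla\times q^n$. Plancherel then gives $\|\sigma^n\|_{\Ld^2(Q)}\le C_d\|q^n\|_{\Ld^2(Q)}$, closing the induction.

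The main (and essentially only) obstacle is thus the compatibility of the overdetermined Hodge-type system~\eqref{e.cor-4}, which rests on the identity $\nabla\cdot q^n=0$ and is precisely what motivates the specific coupling between $\varphi^n$ and $\sigma^{n-1}$ appearing in~\eqref{e.cor-1}. Everything else is routine Lax--Milgram and Poincar\'e bookkeeping, each round of which loses a multiplicative factor $C(d,\lambda)$.
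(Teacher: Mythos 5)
Your proposal is correct and follows essentially the same route as the paper: an induction combining energy estimates for $\varphi^n$, Cauchy--Schwarz bounds for $\bar\Aa^n$ and $q^n$, an $\Ld^2$ bound for $\sigma^n$ in terms of $q^n$, and the Poincar\'e inequality on $Q$, yielding the geometric growth $C^n$. The only difference is that you spell out the classical well-posedness of the flux corrector $\sigma^n$ (the Fourier construction and its compatibility, resting on $\nabla\cdot q^n=0$ and $\int_Q q^n=0$), which the paper treats as standard; a minor cosmetic slip is your claim that the field under the divergence in~\eqref{e.cor-1} has zero mean (false for $n=1$, where it is $\Aa\,\ee_{i_1}$), but this is not needed for Lax--Milgram and does not affect the argument.
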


\begin{proof}
A priori estimates yield for all $n\ge1$
\begin{eqnarray*}
\lambda\|\nabla\varphi^n\|_{\Ld^2(Q)}&\lesssim&\|\varphi^{n-1}\|_{\Ld^2(Q)}+\|\sigma^{n-1}\|_{\Ld^2(Q)},\\
\|\nabla\sigma^n\|_{\Ld^2(Q)}&\lesssim&\|q^n\|_{\Ld^2(Q)},\\
\|q^n\|_{\Ld^2(Q)}&\lesssim&\|\nabla\varphi^n\|_{\Ld^2(Q)}+\|\varphi^{n-1}\|_{\Ld^2(Q)}+\|\sigma^{n-1}\|_{\Ld^2(Q)}+|\bar\Aa^{n}|,\\
|\bar\Aa^n|&\lesssim&\|\nabla\varphi^n\|_{\Ld^2(Q)}+\|\varphi^{n-1}\|_{\Ld^2(Q)}.
\end{eqnarray*}
Applying the Poincaré inequality, the conclusion follows from a direct induction.
\end{proof}

We recall the use of these correctors in homogenization.
For $f\in\Ld^2(\R^d)^d$, we consider the solution $u_{\e,f}$ of the rescaled elliptic PDE~\eqref{eq:ueps}.
Standard two-scale expansion techniques~\cite{BLP-78} suggest the Ansatz
\begin{align}\label{eq:approx-2scale}
\nabla u_{\e,f}=\sum_{k=0}^\infty\e^k\varphi^k_{i_1\ldots i_k}(\tfrac\cdot\e)\,\nabla^k_{i_1\ldots i_k}\bar U_{\e,f},
\end{align}
where $\bar U_{\e,f}$ satisfies
\[-\nabla\cdot\Big(\sum_{k=1}^\infty\e^k\bar\Aa^k_{i_1\ldots i_{k-1}}\nabla^{k-1}_{i_1\ldots i_{k-1}}\Big)\nabla\bar U_{\e,f}=\nabla\cdot f.\]
Since the convergence of the series in~\eqref{eq:approx-2scale} does not hold in general for $f\in\Ld^2(\R^d)^d$, we focus on partial sum approximations. Moreover, as in Remark~\ref{rem:homog-eq}, the equation for $\bar U_{\e,f}$ is ill-posed in general and a suitable proxy needs to be devised (cf.\@ also~\cite{KMS-06}).
A precise statement is as follows; note that Theorem~\ref{th:per} is then a consequence of the equivalence in Proposition~\ref{prop:main-gen}.

\begin{prop}[Classical corrector theory --- periodic setting]\label{prop:cor-per}
Let $d\ge1$ and let the coefficient field $\Aa$ be periodic.
Given $f\in C^\infty_c(\R^d)^d$ and $n\ge1$, let the $n$th-order homogenized solution $\bar u_{\e,f}^n$ for~\eqref{eq:ueps} be defined as in the statement of Proposition~\ref{prop:main-gen}(II) with homogenized coefficients defined in~\eqref{e.cor-2}.
Then, for all $n\ge0$,
\begin{align*}
\Big\|\nabla \Big(u_{\e,f}-\sum_{k=0}^n\e^{k}\varphi_{i_1\ldots i_k}^k(\tfrac\cdot\e)\nabla_{i_1\ldots i_k}^k\bar u_{\e,f}^n\Big)\Big\|_{\Ld^2(\R^d\times Q)}\,\le\,(\e C)^n\|\langle\nabla\rangle^{2n-1}f\|_{\Ld^2(\R^d)},
\end{align*}
where the constant $C$ depends only on $d,\lambda$. In particular, for all $n\ge0$,
\[\big\|\nabla(\expec{u_{\e,f}}-\bar u_{\e,f}^n)\big\|_{\Ld^2(\R^d)}\,\le\,(\e C)^n\|\langle\nabla\rangle^{2n-1}f\|_{\Ld^2(\R^d)}.\qedhere\]
\end{prop}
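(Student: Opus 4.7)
The plan is to run the classical two-scale expansion argument, now with the well-posed proxy $\bar u_{\e,f}^n$ of Proposition~\ref{prop:main-gen}(II) in place of the (possibly ill-posed) formal homogenized solution. Setting $\varphi^0\equiv1$, I introduce the truncated Ansatz
\[w_{\e,f}^n(x,\omega):=\sum_{k=0}^n\e^k\,\varphi^k_{i_1\ldots i_k}(\tfrac{x}{\e})\,\nabla^k_{i_1\ldots i_k}\bar u_{\e,f}^n(x),\]
where $\varphi^k(\cdot/\e)$ is understood as the corrector of the realization $\Aa(\cdot,\omega)$ at the fast scale (hence implicitly $\omega$-dependent through the shift parametrizing the ensemble). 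Since $u_{\e,f}$ solves $-\nabla\cdot\Aa(\tfrac\cdot\e,\omega)\nabla u_{\e,f}=\nabla\cdot f$, the task reduces via an energy estimate to showing that $-\nabla\cdot\Aa(\tfrac\cdot\e,\omega)\nabla w_{\e,f}^n-\nabla\cdot f$ can be written as $\nabla\cdot g_{\e,f}^n$ with $\|g_{\e,f}^n\|_{\Ld^2(\R^d\times Q)}\le(\e C)^n\|\langle\nabla\rangle^{2n-1}f\|_{\Ld^2(\R^d)}$.

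The bulk of the work is the residual computation. Expanding via the product rule and grouping by formal $\e$-order, the key fast-variable identity is
\[\Aa\nabla\varphi^k_{i_1\ldots i_k}+(\Aa\varphi^{k-1}_{i_1\ldots i_{k-1}}-\sigma^{k-1}_{i_1\ldots i_{k-1}})\,\ee_{i_k}=\bar\Aa^k_{i_1\ldots i_{k-1}}\ee_{i_k}+q^k_{i_1\ldots i_k},\]
which is exactly \eqref{e.cor-3}, combined with $q^k=\nabla\cdot\sigma^k$ and the skew-symmetry of $\sigma^k$ (cf.\ \eqref{e.cor-4}); the latter rewrites the contribution of $q^k$ against $\nabla^{k+1}\bar u_{\e,f}^n$ as a genuine divergence, thereby producing an extra $\e$ factor. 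After telescoping in $k$, the collected leading operator $-\nabla\cdot\sum_{k=1}^n\e^{k-1}\bar\Aa^k\nabla^{k-1}\nabla$ acting on $\bar u_{\e,f}^n$ matches $\nabla\cdot f$ up to the $\e^n$-order boundary term of \eqref{eq:barun}, itself a divergence of products of $\bar\Aa^k$'s with derivatives of the $\tilde u_f^k$'s. The leftover residual terms, arising from truncating the expansion at order $n$, are divergences of products of $\varphi^k$, $\sigma^k$ at the fast scale with derivatives of $\bar u_{\e,f}^n$ of total order at most $2n$, each carrying an explicit $\e^n$ factor.

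To conclude, I bound these terms using Lemma~\ref{lem:cor-per} for $\|(\varphi^k,\sigma^k)\|_{\Ld^2(Q)}+|\bar\Aa^k|\le C^k$, combined with the $\e$-periodicity of the rescaled correctors and stationarity of the shift $\omega\in Q$ (which turns $\Ld^2_x$-norms of $\varphi^k(\cdot/\e)$-type factors into $\Ld^2_{x,\omega}$-norms evaluating to $\|\varphi^k\|_{\Ld^2(Q)}$ times the $\Ld^2_x$-norm of the slow factor). The derivative estimates on the proxy follow by induction from constant-coefficient elliptic regularity applied to \eqref{ant1}--\eqref{ant2}: equation \eqref{ant1} yields $\|\langle\nabla\rangle^m\nabla\tilde u_f^1\|_{\Ld^2}\lesssim\|\langle\nabla\rangle^{m-1}f\|_{\Ld^2}$, and \eqref{ant2} loses $k-1$ derivatives at step $k$, giving $\|\langle\nabla\rangle^{2n-1-j}\nabla^{j+1}\bar u_{\e,f}^n\|_{\Ld^2}\le C^n\|\langle\nabla\rangle^{2n-1}f\|_{\Ld^2}$ for $0\le j\le 2n-1$. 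A standard energy estimate using uniform ellipticity of $\Aa(\tfrac\cdot\e,\omega)$ then produces the first bound. For the averaged statement, $\int_Q\varphi^k=0$ for $k\ge1$ implies $\expec{w_{\e,f}^n}=\bar u_{\e,f}^n$, and Jensen's inequality on $\nabla(u_{\e,f}-w_{\e,f}^n)$ yields the second estimate. The main obstacle I expect is the careful bookkeeping of the telescoping cancellations identifying the residual, in particular matching the formal operator $-\nabla\cdot\sum\e^{k-1}\bar\Aa^k\nabla^{k-1}\nabla$ against the well-posed proxy equation~\eqref{eq:barun} so that the mismatch contributes only at order $\e^n$.
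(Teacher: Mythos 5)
Your proposal is correct and follows essentially the same route as the paper: the truncated two-scale expansion $\sum_{k\le n}\e^k\varphi^k(\tfrac\cdot\e)\nabla^k\bar u^n_{\e,f}$, the residual identified through \eqref{e.cor-3}, $q^k=\nabla\cdot\sigma^k$ and the skew-symmetry of $\sigma^k$ (the paper packages this bookkeeping as the inductive identity \eqref{eq:Fnw}), matching against the proxy equation \eqref{eq:barun} so that only $\e^n$-order terms survive, then an energy estimate with the bounds of Lemma~\ref{lem:cor-per} and Fubini over the shift $z\in Q$, and finally $\int_Q\varphi^k=0$ plus Jensen for the averaged statement. One small correction that does not affect the outcome: \eqref{ant1} gives $\|\langle\nabla\rangle^m\nabla\tilde u^1_f\|_{\Ld^2}\lesssim\|\langle\nabla\rangle^{m}f\|_{\Ld^2}$ (no gain of a derivative), and the uniform family $\|\langle\nabla\rangle^{2n-1-j}\nabla^{j+1}\bar u^n_{\e,f}\|_{\Ld^2}\lesssim\|\langle\nabla\rangle^{2n-1}f\|_{\Ld^2}$ is overclaimed since $\nabla\tilde u^l$ costs $l-1$ derivatives on $f$; only the instances actually needed (e.g.\ $\|\nabla^{n+1}\bar u^n_{\e,f}\|_{\Ld^2}$ and $\|\nabla^k\tilde u^l\|_{\Ld^2}$ with $k\le n$) hold, and they suffice for the stated $\|\langle\nabla\rangle^{2n-1}f\|_{\Ld^2}$ bound.
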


\begin{rem}
With the definition $\{\Aa(x,z):=\Aa(x+z)\}_{z\in Q}$ of the periodic ensemble of coefficient fields, recall that the solution $u_{\e,f}$ is viewed as a map $\R^d\times Q\to\R$, where for a translation $z\in Q$ the function $u_{\e,f}(\cdot,z)\in\dot H^1(\R^d)$ is the unique Lax-Milgram solution in~$\R^d$ of
\[-\nabla \cdot \Aa(\tfrac\cdot\e+z) \nabla u_{\e,f}(\cdot,z)\,=\,\nabla \cdot f.\]
The averaged solution then takes the form $\expec{u_{\e,f}}(x):=\int_Q u_{\e,f}(x,z)\,dz$.
\end{rem}

\begin{proof}
By scaling, it suffices to consider $\e=1$, and we drop it from all subscripts in the notation.
We split the proof into two steps.

\medskip
\step1 For $n\ge0$, given $\bar w\in C^\infty_c(\R^d)$, we define its \emph{$n$th-order two-scale expansion}
\[F^n[\bar w]:=\sum_{k=0}^n\varphi_{i_1\ldots i_k}^k\nabla_{i_1\ldots i_k}^k\bar w,\]
and we claim that it satisfies the following PDE in $\R^d$,
\begin{multline}\label{eq:Fnw}
\nabla\cdot\Aa\nabla F^n[\bar w]
\,=\,\nabla\cdot\Big(\sum_{k=1}^{n}\bar\Aa^k_{i_1\ldots i_{k-1}}\nabla^{k-1}_{i_1\ldots i_{k-1}}\Big)\nabla\bar w\\
+\nabla\cdot\big((\Aa\varphi_{i_1\ldots i_n}^n-\sigma_{i_1\ldots i_n}^n)\nabla\nabla^n_{i_1\ldots i_n}\bar w\big).
\end{multline}
A proof can be found e.g.~in~\cite{DO1}: it follows from an inductive computation, exploiting the definition of correctors and flux correctors.
It is reproduced here for completeness.
The claim~\eqref{eq:Fnw} is obvious for $n=0$. Now, if it holds for some~$n\ge0$, we deduce
\begin{multline}\label{eq:pre-dec-homog-induc}
\nabla\cdot\Aa\nabla F^{n+1}[\bar w]
\,=\,\nabla\cdot\bigg(\sum_{k=1}^{n}\bar\Aa^{k}_{i_1\ldots i_{k-1}}\nabla\nabla^{k-1}_{i_1\ldots i_{k-1}}\bar w\bigg)\\
+\nabla\cdot\big((\Aa\varphi_{i_1\ldots i_n}^n-\sigma_{i_1\ldots i_n}^n)\nabla\nabla^n_{i_1\ldots i_n}\bar w\big)+\nabla\cdot\Aa\nabla\big(\varphi_{i_1\ldots i_{n+1}}^{n+1}\nabla_{i_1\ldots i_{n+1}}^{n+1}\bar w\big).
\end{multline}
The definition of $\sigma_{i_1\ldots i_{n+1}}^{n+1}$ yields
\begin{multline*}
\nabla\cdot\big((\Aa\varphi_{i_1\ldots i_n}^n-\sigma_{i_1\ldots i_n}^n)\nabla\nabla^n_{i_1\ldots i_n}\bar w\big)
\,=\,\nabla\cdot\big((\nabla\cdot\sigma_{i_1\ldots i_{n+1}}^{n+1}) \nabla^{n+1}_{i_1\ldots i_{n+1}}\bar w\big)\\
-\nabla\cdot\big(\Aa\nabla\varphi_{i_1\ldots i_{n+1}}^{n+1}\nabla^{n+1}_{i_1\ldots i_{n+1}}\bar w\big)+\nabla\cdot\big(\bar\Aa_{i_1\ldots i_n}^{n+1}\nabla\nabla^{n}_{i_1\ldots i_{n}}\bar w\big),
\end{multline*}
and hence, using the skew-symmetry of $\sigma_{i_1\ldots i_{n+1}}^{n+1}$ and decomposing
\[\nabla\varphi_{i_1\ldots i_{n+1}}^{n+1}\nabla^{n+1}_{i_1\ldots i_{n+1}}\bar w=\nabla(\varphi_{i_1\ldots i_{n+1}}^{n+1}\nabla^{n+1}_{i_1\ldots i_{n+1}}\bar w)-\varphi_{i_1\ldots i_{n+1}}^{n+1}\nabla\nabla^{n+1}_{i_1\ldots i_{n+1}}\bar w,\]
we obtain
\begin{multline*}
\nabla\cdot\big((\Aa\varphi_{i_1\ldots i_n}^n-\sigma_{i_1\ldots i_n}^n)\nabla\nabla^n_{i_1\ldots i_n}\bar w\big)
\,=\,\nabla\cdot\big((\Aa\varphi_{i_1\ldots i_{n+1}}^{n+1}-\sigma_{i_1\ldots i_{n+1}}^{n+1})\nabla\nabla^{n+1}_{i_1\ldots i_{n+1}}\bar w\big)\\
-\nabla\cdot\Aa\nabla\big(\varphi_{i_1\ldots i_{n+1}}^{n+1}\nabla^{n+1}_{i_1\ldots i_{n+1}}\bar w\big)+\nabla\cdot\big(\bar\Aa_{i_1\ldots i_n}^{n+1}\nabla\nabla^{n}_{i_1\ldots i_{n}}\bar w\big).
\end{multline*}
Injecting this into~\eqref{eq:pre-dec-homog-induc} leads to the claim~\eqref{eq:Fnw} at level $n+1$.

\medskip
\step2 Conclusion.

\nopagebreak
\noindent
Let $n\ge1$.
Combining~\eqref{eq:Fnw} with the equation~\eqref{eq:barun} for $\bar u_{f}^n$ leads to
\begin{multline*}
-\nabla\cdot\Aa\nabla (u_f-F^n[\bar u_f^n])
\,=\,\nabla\cdot\bigg(\sum_{k=2}^{n}\bar\Aa^k_{i_1\ldots i_{k-1}}\nabla^{k-1}_{i_1\ldots i_{k-1}}\nabla\sum_{l=n+2-k}^{n}\tilde u^{l}\bigg)\\
+\nabla\cdot\big((\Aa\varphi_{i_1\ldots i_n}^n-\sigma_{i_1\ldots i_n}^n)\nabla\nabla^n_{i_1\ldots i_n}\bar u_f^n\big).
\end{multline*}
The a priori estimates of Lemma~\ref{lem:cor-per} yield
\begin{eqnarray*}
\lefteqn{\int_{\R^d}\int_Q \big|(\Aa\varphi_{i_1\ldots i_n}^n-\sigma_{i_1\ldots i_n}^n)(x,z)\nabla\nabla^n_{i_1\ldots i_n}\bar u_f^n(x)\big|^2dz\,dx}
\\
&\le& \int_{\R^d}|\nabla^{n+1}\bar u_f^n(x)|^2 \int_Q |(\Aa\varphi_{i_1\ldots i_n}^n-\sigma_{i_1\ldots i_n}^n)(x+z)|^2dz\,dx
\\
&\le& C^n\|\nabla^{n+1}\bar u_f^n\|_{\Ld^2(\R^d)}^2.
\end{eqnarray*}
Hence, by an energy estimate,
\begin{align*}
\|\nabla (u_f-F^n[\bar u_f^n])\|_{\Ld^2(\R^d\times Q)}\,&\le\, C^n\sum_{k=2}^{n}\sum_{l=n+2-k}^{n}\|\nabla^{k}\tilde u^{l}\|_{\Ld^2(\R^d)}+C^n\|\nabla^{n+1}\bar u_f^n\|_{\Ld^2(\R^d)}\\
\,&\le\,C^{n}\|\nabla^n\langle\nabla\rangle^{n-1}f\|_{\Ld^2(\R^d)}.\qedhere
\end{align*}
\end{proof}

\section{Random setting}

In this section, we follow the approach presented in the periodic setting and start by recalling the conclusions of the classical corrector theory. 
Under sufficient mixing conditions on the coefficient field $\Aa$, correctors and flux correctors $(\varphi^n,\sigma^n)$ are now well-defined in $\Ld^2(\Omega)$ only up to order $n<\lceil\frac{d}2\rceil$.
As a consequence, we obtain an analogue of Proposition~\ref{prop:cor-per} for the $\Ld^2$-approximation of the solution $u_{\e,f}$ up to the accuracy~$O(\e^{d/2})$ only (with a correction $|\!\log\e|^{1/2}$ in even dimensions): the classical $\Ld^2$-based corrector theory is not accurate at the order $\e^{d/2}$ of fluctuations~\cite{GuM,DGO1}.
In contrast, for the averaged solution, Conjecture~\ref{conj:main} together with Proposition~\ref{prop:main-gen} implies an approximation result for $\expec{u_{\e,f}}$ up to order $O(\e^{2d-\eta})$ for all $\eta>0$. We briefly discuss the consequences of such a result in the context of fluctuations.

\subsection{Classical corrector theory}

We focus for simplicity on the model framework of a Gaussian coefficient field $\Aa$ with integrable covariance.
More precisely, for some $k\ge1$, let $a$ be an $\R^k$-valued Gaussian random field, constructed on some probability space $(\Omega,\Pm)$, which is stationary and centered, hence characterized by its covariance function
\begin{equation*}
c(x):=\expec{a(x)\otimes a(0)},\qquad c:\R^d\to\R^{k\times k}.
\end{equation*}
We assume that  the covariance function is integrable at infinity
$\int_{\R^d} |c(x)|\,dx \,<\,\infty$.
Given a map $h\in C^1_b(\R^k)^{d\times d}$, we define $\Aa:\R^d\to\R^{d\times d}$ by $\Aa(x)=h(a(x))$, and assume that it satisfies the boundedness and ellipticity properties~\eqref{eq:unif-ell}  almost surely. We (abusively) call such a coefficient field $\Aa$ \emph{Gaussian with integrable covariance}.

In this setting, we consider the corrector equations \eqref{e.cor-1}--\eqref{e.cor-4}, where the average $\int_Q$  on the unit cell $Q$ is replaced by the expectation $\expec{\cdot}$.
Based on~\cite{GNO-reg,GNO-quant,BFFO-17} (or alternatively~\cite{AKM-book,GO4} if $\Aa$ rather satisfies a finite range of
dependence assumption), we obtain the following optimal control of correctors
(cf.\@ also~\cite[Proposition~C.4]{BG} for a similar statement).

\begin{lem}\label{lem:gaus}
Let $d\ge1$, let $\Aa$ be Gaussian with integrable covariance, and set $\ell:=\lceil\frac {d}2\rceil$.
For all $0\le n <\ell$, there exist unique stationary solutions $\varphi^n,\sigma^n \in \Ld^2(\Omega,H^1_\loc(\R^d))$ of \eqref{e.cor-1}--\eqref{e.cor-4} with $\expec{(\varphi^n,\sigma^n)}=0$, whereas for $n=\ell$ there exist unique (non-stationary) solutions $\varphi^\ell,\sigma^\ell \in \Ld^2(\Omega,H^1_\loc(\R^d))$ such that $\nabla \varphi^\ell,\nabla \sigma^\ell$ are stationary and $(\varphi^\ell(0),\sigma^\ell(0))=0$ almost surely. In particular, $\bar \Aa^n$ is well-defined for all $1\le n\le\ell$.
In addition, for all $x\in \R^d$,
\begin{gather*}
\sum_{n=1}^{\ell-1} \|(\varphi^n,\sigma^n)\|_{\Ld^2(\Omega)}+\sum_{n=1}^{\ell}\|\nabla(\varphi^n,\sigma^n)\|_{\Ld^2(\Omega)}+\sum_{n=1}^\ell|\bar \Aa^n|\,\lesssim\, 1,\\
\|(\varphi^\ell,\sigma^\ell)(x)\|_{\Ld^2(\Omega)}\,\lesssim\,
\begin{cases}
\log^{\frac12}(2+|x|),&\text{if }d\text{ is even},\\
1+|x|^\frac12, &\text{if }d\text{ is odd.}
\end{cases}
\qedhere
\end{gather*}
\end{lem}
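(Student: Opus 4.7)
The plan is to proceed by induction on $n$, the base case $n=0$ being immediate from $\varphi^0\equiv 1$ and $\sigma^0\equiv 0$. Suppose $(\varphi^k,\sigma^k,\bar\Aa^k,q^k)_{0\le k\le n-1}$ have been constructed with the stated properties. First I would build $\varphi^n$, then read off $\bar\Aa^n$ from~\eqref{e.cor-2} and $q^n$ from~\eqref{e.cor-3}, and finally construct $\sigma^n$ in perfect parallel with~$\varphi^n$.

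For $\varphi^n$, my plan is to introduce a massive regularization: for each $T>0$, let $\varphi_T^n$ be the unique centered stationary solution in $\Ld^2(\Omega,H^1_\loc(\R^d))$ of
\[T^{-1}\varphi_T^n-\nabla\cdot\Aa\nabla\varphi_T^n=\nabla\cdot\big((\Aa\varphi^{n-1}_{i_1\ldots i_{n-1}}-\sigma^{n-1}_{i_1\ldots i_{n-1}})\,\ee_{i_n}\big).\]
Existence, uniqueness, and the energy bound $\|\nabla\varphi_T^n\|_{\Ld^2(\Omega)}\lesssim\|(\varphi^{n-1},\sigma^{n-1})\|_{\Ld^2(\Omega)}\lesssim 1$ follow from Lax--Milgram on the augmented Hilbert space. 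The key is then to pass to the limit $T\to\infty$ while controlling $\varphi_T^n$ itself (not merely its gradient) in $\Ld^2(\Omega)$. The tool I would use is the Gaussian spectral gap, available here since $\Aa=h(a)$ with $h\in C^1_b$ and $a$ a centered stationary Gaussian field with integrable covariance $c$:
\[\varm{X}\,\lesssim\,\expecm{\textstyle\iint_{\R^d\times\R^d}|\partial_aX(y)|\,|\partial_aX(y')|\,|c(y-y')|\,dy\,dy'}.\]
Applied to $X=\varphi_T^n(x)$, this reduces matters to estimating the Malliavin derivative $\partial_a\varphi_T^n(x,y)$, which solves the linearized corrector equation with source localized near $y$ and involving $\nabla\varphi_T^n$ together with the Malliavin derivatives $\partial_a\varphi^{n-1}$, $\partial_a\sigma^{n-1}$ inherited from the previous induction layer. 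Using the annealed Green's function bounds and the annealed large-scale regularity of~\cite{GNO-reg}, one shows inductively that $\|\partial_a\varphi_T^n(x,y)\|_{\Ld^2(\Omega)}\lesssim\langle x-y\rangle^{n-d}$, up to possible logarithmic corrections.

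Inserting this into the spectral gap bound and integrating in $y$ yields, uniformly in $T$,
\[\|\varphi_T^n(x)\|_{\Ld^2(\Omega)}^2\,\lesssim\,\int_{\R^d}\langle y\rangle^{2(n-d)}\,dy,\]
which is finite precisely when $n<d/2$, i.e.\ for $n\le\ell-1$ with $\ell=\lceil d/2\rceil$. Under this condition, I would argue that $\{\varphi_T^n\}_T$ is Cauchy in $\Ld^2(\Omega)$ as $T\to\infty$ --- the difference $\varphi_T^n-\varphi_{T'}^n$ solves an analogous equation whose massive source is handled by the same sensitivity estimate --- and identify the limit with the desired stationary $\varphi^n$ solving~\eqref{e.cor-1}, with $\expec{\varphi^n}=0$ and the claimed $\Ld^2(\Omega)$ bound. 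For the borderline $n=\ell$, the integral in $y$ diverges (logarithmically in even $d$, algebraically in odd $d$), but the same analysis applied to $\nabla\varphi_T^\ell$ still yields $\|\nabla\varphi_T^\ell\|_{\Ld^2(\Omega)}\lesssim 1$ uniformly in $T$; I would pass to the limit to produce a stationary $\nabla\varphi^\ell\in\Ld^2(\Omega)$, recover $\varphi^\ell$ as its anchored primitive with $\varphi^\ell(0)=0$, and extract the stated sublinear growth by applying the spectral-gap sensitivity bound directly to $\varphi^\ell(x)-\varphi^\ell(0)$ with the $y$-integration truncated at scale $|x|$.

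With $\varphi^n$ in hand, the bound on $\bar\Aa^n$ is immediate from~\eqref{e.cor-2} and the already-controlled $\|(\nabla\varphi^n,\varphi^{n-1})\|_{\Ld^2(\Omega)}$; the flux $q^n$ from~\eqref{e.cor-3} is stationary, mean-zero, and bounded in $\Ld^2(\Omega)$ for the same reason. The construction of $\sigma^n$ from the vector Poisson system~\eqref{e.cor-4} then follows the exact same scheme --- massive approximation $T^{-1}\sigma_T^n-\triangle\sigma_T^n=\nabla\times q^n$, spectral-gap sensitivity estimate, same dimension threshold at $n=\ell$ --- which closes the induction. The main obstacle is the sensitivity analysis itself with its dimension-sharp Green's function ingredients; rather than re-derive it, I would package it directly from~\cite{GNO-reg,GNO-quant,BFFO-17} (or from~\cite{AKM-book,GO4} under a finite-range dependence assumption), where the output is already formulated for the base corrector and extends to higher orders by the induction outlined above.
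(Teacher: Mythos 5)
The paper offers no proof of this lemma at all: it is imported from the quantitative corrector literature, namely \cite{GNO-reg,GNO-quant,BFFO-17} (or \cite{AKM-book,GO4} under finite range of dependence), which are exactly the references you defer to for the sensitivity analysis. Your sketch reconstructs the standard argument of those works --- induction on the order, massive regularization, Gaussian spectral gap, sensitivity bound of order $\langle x-y\rangle^{n-d}$ giving the threshold $n<\frac d2$ and the borderline $\log^{\frac12}(2+|x|)$ (resp.\ $|x|^{\frac12}$) growth at $n=\ell$ --- with the correct scalings throughout, so it is correct as an outline and follows essentially the same route as the paper's source.
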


Mimicking the proof of Proposition~\ref{prop:cor-per}, we are then led to the following (cf.~\cite{Gu-17,DO1}).
Note that this corrector theory is not accurate at the order $\e^{d/2}$ of fluctuations.

\begin{prop}[Classical corrector theory --- random setting]\label{prop:cor-rand}
Let $d\ge1$, let $\Aa$ be Gaussian with integrable covariance, and set $\ell:=\lceil\frac d2\rceil$.
Given $f\in C^\infty_c(\R^d)^d$, let the $\ell$th-order homogenized solution $\bar u_{\e,f}^\ell$ for~\eqref{eq:ueps} be defined as in the statement of Proposition~\ref{prop:main-gen}(II) with homogenized coefficients defined in Lemma~\ref{lem:gaus}.
Then,
\begin{align*}
\Big\|\nabla \Big(u_{\e,f}-\sum_{k=0}^\ell\e^{k}\varphi_{i_1\ldots i_k}^k(\tfrac\cdot\e)\nabla_{i_1\ldots i_k}^k\bar u_{\e,f}^\ell\Big)\Big\|_{\Ld^2(\R^d\times Q)}\,\le\,\e^{\frac d2}\mu_d(\e) C\|\langle\nabla\rangle^{2\ell-1}f\|_{\Ld^2(\R^d)},
\end{align*}
where the constant $C$ depends only on $d,\lambda$ and where
\[\mu_d(\e):=\begin{cases}
|\!\log \e|^\frac12,&\text{if $d$ is even},\\
1,&\text{if $d$ is odd}.
\end{cases}\]
In particular,
\[\big\|\nabla(\expec{u_{\e,f}}-\bar u_{\e,f}^\ell)\big\|_{\Ld^2(\R^d)}\,\le\,\e^{\frac d2} \mu_d(\e)C\|\langle\nabla\rangle^{2\ell-1}f\|_{\Ld^2(\R^d)}.\qedhere\]
\end{prop}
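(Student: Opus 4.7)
The proposed proof mirrors that of Proposition~\ref{prop:cor-per} step by step, with one essential modification at the top order: since by Lemma~\ref{lem:gaus} the corrector $\varphi^\ell$ and flux corrector $\sigma^\ell$ are non-stationary and only satisfy the sublinear/logarithmic growth bounds of that lemma, the top-order remainder no longer sits in $\Ld^2(\Omega)$ uniformly in $x$, and this loss of control is precisely what is encoded by the factor $\mu_d(\e)$. Everything else goes through unchanged, because the key two-scale identity (the analogue of~\eqref{eq:Fnw}) is a purely algebraic consequence of the defining PDEs \eqref{e.cor-1}--\eqref{e.cor-4} and of the skew-symmetry of $\sigma^n$, none of which uses stationarity.

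Concretely, I would first introduce the rescaled $\ell$th-order two-scale expansion
\[F^\ell_\e[\bar w](x)\,:=\,\sum_{k=0}^\ell\e^k\,\varphi^k_{i_1\ldots i_k}(\tfrac x\e)\,\nabla^k_{i_1\ldots i_k}\bar w(x),\]
and repeat the induction of Step~1 in the proof of Proposition~\ref{prop:cor-per} to obtain
\[\nabla\cdot\Aa(\tfrac\cdot\e)\nabla F^\ell_\e[\bar w]\,=\,\nabla\cdot\Big(\sum_{k=1}^{\ell}\e^{k-1}\bar\Aa^k_{i_1\ldots i_{k-1}}\nabla^{k-1}_{i_1\ldots i_{k-1}}\Big)\nabla\bar w+\e^\ell\nabla\cdot\big((\Aa\varphi^\ell_{i_1\ldots i_\ell}-\sigma^\ell_{i_1\ldots i_\ell})(\tfrac\cdot\e)\,\nabla\nabla^\ell_{i_1\ldots i_\ell}\bar w\big).\]
Combining with~\eqref{eq:barun} at $n=\ell$ and with the equation satisfied by $u_{\e,f}$, the error $w_\e:=u_{\e,f}-F^\ell_\e[\bar u^\ell_{\e,f}]$ solves
\[-\nabla\cdot\Aa(\tfrac\cdot\e)\nabla w_\e\,=\,\nabla\cdot\bigg(\sum_{k=2}^\ell\e^{k-1}\bar\Aa^k_{i_1\ldots i_{k-1}}\nabla\nabla^{k-1}_{i_1\ldots i_{k-1}}\sum_{n=\ell+2-k}^\ell\e^{n-1}\tilde u^n_f\bigg)+\e^\ell\nabla\cdot\big((\Aa\varphi^\ell-\sigma^\ell)(\tfrac\cdot\e)\nabla\nabla^\ell\bar u^\ell_{\e,f}\big),\]
exactly as in the periodic case.

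The heart of the argument, and the main obstacle, is the $\Ld^2(\R^d\times\Omega)$ energy estimate for this error equation. The first, lower-order sum is treated verbatim as in Step~2 of the proof of Proposition~\ref{prop:cor-per}, using the deterministic bounds $|\bar\Aa^k|\lesssim1$ and elliptic regularity for~\eqref{ant1}--\eqref{ant2}; it contributes at most $O(\e^\ell)\le O(\e^{d/2})$. The dominant, non-stationary term requires control of
\[\e^{2\ell}\int_{\R^d}\expecm{\big|(\Aa\varphi^\ell-\sigma^\ell)(x/\e)\big|^2}\,|\nabla^{\ell+1}\bar u^\ell_{\e,f}(x)|^2\,dx,\]
into which I would plug the growth estimate of Lemma~\ref{lem:gaus}: the weight is dominated by $|\log\e|+\log(2+|x|)$ when $d=2\ell$ is even and by $1+|x|/\e$ when $d=2\ell-1$ is odd. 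Combined with the $\Ld^2$-decay of $\nabla^{\ell+1}\bar u^\ell_{\e,f}$ against these slowly growing weights (itself controlled by $\|\langle\nabla\rangle^{2\ell-1}f\|_{\Ld^2}$ through~\eqref{ant1}--\eqref{ant2}), this produces a contribution of order $\e^d|\log\e|$ in the even case and $\e^d$ in the odd case; taking square roots yields the announced $\e^{d/2}\mu_d(\e)$ bound.

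Finally, the announced bound on $\|\nabla(\expec{u_{\e,f}}-\bar u^\ell_{\e,f})\|_{\Ld^2(\R^d)}$ follows by the triangle inequality: Jensen's inequality gives $\|\nabla\expec{u_{\e,f}}-\nabla\expec{F^\ell_\e[\bar u^\ell_{\e,f}]}\|_{\Ld^2}\le\|\nabla w_\e\|_{\Ld^2(\R^d\times\Omega)}$, while the discrepancy $\nabla\expec{F^\ell_\e[\bar u^\ell_{\e,f}]}-\nabla\bar u^\ell_{\e,f}=\sum_{k=1}^\ell\e^k\nabla\big(\expec{\varphi^k(\cdot/\e)}\nabla^k\bar u^\ell_{\e,f}\big)$ is handled termwise: the stationary correctors $\varphi^k$ for $k<\ell$ have zero mean, and the top-order contribution is estimated using $|\expec{\varphi^\ell(x)}|\le\|\varphi^\ell(x)\|_{\Ld^2(\Omega)}$ together with the same growth estimate of Lemma~\ref{lem:gaus}, producing a contribution of the same order $\e^{d/2}\mu_d(\e)$.
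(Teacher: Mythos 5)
Your proposal follows exactly the route the paper intends for Proposition~\ref{prop:cor-rand}: the paper gives no separate proof but says to mimic Proposition~\ref{prop:cor-per}, and you do precisely that, replacing the a priori bounds of Lemma~\ref{lem:cor-per} by the corrector estimates of Lemma~\ref{lem:gaus}, with the non-stationary top-order term $(\varphi^\ell,\sigma^\ell)$ producing the factor $\mu_d(\e)$ exactly as intended. The only point to make explicit in your last step is that in fact $\expec{\varphi^\ell}\equiv0$ (since $\nabla\varphi^\ell$ is stationary, $\varphi^\ell(0)=0$, and $\|\varphi^\ell(x)\|_{\Ld^2(\Omega)}$ grows sublinearly, one has $\expec{\varphi^\ell(x)}=\expec{\nabla\varphi^\ell}\cdot x\equiv0$), which disposes of the term where the gradient falls on $\expec{\varphi^\ell(\cdot/\e)}$ and would otherwise cost a factor $\e^{-1}$ that your pointwise bound on $|\expec{\varphi^\ell(x)}|$ alone does not control.
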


\subsection{Consequences of Conjecture~\ref{conj:main}}\label{sec:fluc}

We now investigate the implications of Conjecture~\ref{conj:main}.
In view of Proposition~\ref{prop:main-gen}, it would lead to an effective approximation result for the averaged solution up to the accuracy $O(\e^{2d-\eta})$, which substantially improves on the above result obtained from the classical corrector theory.

\begin{cor}[of  Conjecture~\ref{conj:main}]\label{coro}
Let $d\ge1$ and let $\Aa$ be Gaussian with integrable covariance.
If Conjecture~\ref{conj:main} holds true, then the (symmetrized) higher-order homogenized coefficients $\bar\Aa^n$ are well-defined for all $1\le n\le2d$ via formula~\eqref{eq:link-B-a}.
For $1\le n\le\lceil\frac d2\rceil$, these coefficients coincide with the ones defined in \eqref{e.cor-2} via averages of correctors.
In addition, given $f\in C^\infty_c(\R^d)^d$, letting the $(2d)$th-order homogenized solution $\bar u_{\e,f}^{2d}$ for~\eqref{eq:ueps} be defined as in the statement of Proposition~\ref{prop:main-gen}(II), we have for all $\eta>0$,
\[\big\|\nabla(\expec{u_{\e,f}}-\bar u_{\e,f}^{2d})\big\|_{\Ld^2(\R^d)}\,\le \, \e^{2d-\eta}C_\eta\|\langle\nabla\rangle^{4d-1}f\|_{\Ld^2(\R^d)},\]
where the constant $C_\eta$ depends only on $d,\lambda,\eta$.
\end{cor}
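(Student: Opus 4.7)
The plan is to obtain Corollary~\ref{coro} as a direct combination of Conjecture~\ref{conj:main} with the equivalence already established in Proposition~\ref{prop:main-gen}. One first adopts the viewpoint that a Gaussian coefficient field with integrable covariance is a legitimate instance of the ``suitable mixing'' framework under which Conjecture~\ref{conj:main} is posited; under this hypothesis, the Fourier symbol $\hat B_0$ is of H\"older class $C^{2d-\eta}$ at the origin for every $\eta>0$. Applying the (I)$\Rightarrow$(II) direction of Proposition~\ref{prop:main-gen} at level $\ell=2d$ then immediately produces the symmetrized higher-order coefficients $\bar\Aa^n$ for $1\le n\le 2d$ through formula~\eqref{eq:link-B-a}, together with the effective estimate
\[\big\|\nabla(\expec{u_{\e,f}}-\bar u_{\e,f}^{2d})\big\|_{\Ld^2(\R^d)}\,\le\,\e^{2d-\eta}C_\eta\|\langle\nabla\rangle^{4d-1}f\|_{\Ld^2(\R^d)},\]
which is precisely the approximation statement of the corollary.

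It remains to identify these new coefficients with the classical ones from~\eqref{e.cor-2} in the range $1\le n\le \ell_0:=\lceil\tfrac d2\rceil$. For this, the plan is to feed the classical correctors back into the reverse implication (II)$\Rightarrow$(I) of Proposition~\ref{prop:main-gen}. Proposition~\ref{prop:cor-rand} already verifies property~(II) at level $\ell_0$ for the classical coefficients, with error $\e^{d/2}\mu_d(\e)$; a short check shows that this bound sits inside the form $\e^{\ell_0-\eta}$ required by~(II), by choosing $\eta>0$ arbitrarily small in even dimensions and $\eta\ge\tfrac12$ in odd dimensions. The (II)$\Rightarrow$(I) direction of Proposition~\ref{prop:main-gen} then forces the formula~\eqref{eq:link-B-a} to hold for the classical coefficients against the derivatives of the \emph{same} Fourier symbol $\hat B_0$. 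Uniqueness of the Taylor coefficients at the origin then implies that the symmetrized classical coefficients coincide with the $\bar\Aa^n$ just constructed, for $1\le n\le \ell_0$.

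The main obstacle is conceptual rather than technical: Conjecture~\ref{conj:main} leaves the exact mixing hypothesis open, so one must either argue explicitly that the Gaussian integrable-covariance setup of Section~\ref{sec:fluc} is covered, or formulate the corollary as a conditional statement under the conjecture specialized to this setting. Once this is granted, the remaining work is essentially bookkeeping, the only nontrivial point being the absorption of $\mu_d(\e)$ (and of the half-power loss $\e^{1/2}$ in odd dimensions) into the H\"older exponent~$\eta$ when comparing the classical and new coefficients.
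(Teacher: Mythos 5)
Your proposal is correct and is essentially the argument the paper intends (the corollary is stated without a separate proof, as an immediate combination of Conjecture~\ref{conj:main} with the (I)$\Rightarrow$(II) direction of Proposition~\ref{prop:main-gen} at $\ell=2d$, together with Proposition~\ref{prop:cor-rand} fed into the (II)$\Rightarrow$(I) direction to identify the symmetrized coefficients for $n\le\lceil d/2\rceil$ via uniqueness of the Taylor polynomial of $\hat B_0$ at the origin). Your absorption of the logarithmic factor $\mu_d(\e)$ and of the half-power loss in odd dimensions into the H\"older exponent $\eta\in(0,1)$, and your remark that the statement is conditional on the Gaussian integrable-covariance setting falling under the conjecture's mixing hypothesis, match the paper's implicit reasoning, so there is no gap.
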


In stochastic homogenization, there is a particular interest in the fluctuations of macroscopic observables of the type $U_\e(f,g):=\e^{-d/2}\int_{\R^d}g\cdot\nabla u_{\e,f}$ with $f,g\in\Ld^2(\R^d)^d$.
Such observables are asymptotically Gaussian and their limiting variance has been completely characterized in~\cite{GuM,DGO1,DO1}.
This should be complemented with a description of the expectation $\expec{U_\e(f,g)}$. While Proposition~\ref{prop:cor-rand} is not precise enough to describe $\nabla\expec{u_{\e,f}}$ in the fluctuation scaling, Corollary~\ref{coro} is, and yields
\[U_\e(f,g)=\big(U_\e(f,g)-\expec{U_\e(f,g)}\big)\,+\,\e^{-\frac d2}\int_{\R^d}g\cdot\nabla\bar u_{\e,f}^{2d}\,+\,O_{f,g,\eta}(\e^{\frac{3d}2-\eta}),\]
where the law of the first right-hand side term is close to a centered Gaussian with fully characterized variance, cf.~\cite{GuM,DGO1,DO1}.
The only difficulty left in this picture is the practical computation of the higher-order homogenized coefficients (although $B$ is well-defined, it is hardly computable in practice).
We believe that the following ``approximation by periodization'' should come out 
of the proof of Conjecture~\ref{conj:main}:
Let $d\ge1$, let $\Aa$ be a stationary ergodic coefficient field, and for all $L>0$ denote by $\Aa_L$ the random field
obtained by periodizing the restriction of $\Aa$ on $[-\frac L 2, \frac L2)^d$.
For all $n\ge1$, denote by $\bar \Aa_L^n$ the (random) $n$th-order homogenized coefficient associated with this periodic medium.
We conjecture that for all $1\le n\le 2d$,
\[\lim_{L\uparrow \infty} \expec{\bar\Aa_L^n} \,=\, \bar\Aa^n,\]
property that has recently been proved in~\cite[Remark~2.4]{DO1} in the limited range $n<d$ by a duality argument.
It would also be of interest to quantify this convergence (as done in~\cite[Theorem~2]{GNO1} for~$\bar\Aa^1$).

\section*{Acknowledgments}
We warmly thank Tom Spencer for attracting our attention to this problem.
The work of MD is supported by F.R.S.-FNRS and by CNRS-Momentum.
Financial support of AG is acknowledged from the European Research Council under the European Community's Seventh Framework Programme (FP7/2014-2019 Grant Agreement QUANTHOM 335410).

\appendix
\section{Proof of Lemma~\ref{lem:main}}\label{app}
The stationarity of the coefficient field $\Aa$ entails that the averaged solution operator $H:=\expec{\Hc}:\Ld^2(\R^d)^d\to\Ld^2(\R^d)^d$ commutes with translations on $\R^d$. Hence, $H$ is a convolution operator on $\Ld^2(\R^d)^d$. Noting that $Hf$ is gradient-like for all $f\in\Ld^2(\R^d)^d$ and that $H$ vanishes on solenoidal vector fields, we deduce that the Fourier symbol $\hat H$ takes the form
$\hat H(\xi)=\hat G(\xi)\xi\otimes\xi$,
for some measurable function $\hat G:\R^d\to\R$.
Now noting that the boundedness and the ellipticity~\eqref{eq:unif-ell} of $\Aa$ imply $H_0\le \Hc\le\frac1\lambda H_0$, where $H_0=\nabla\triangle^{-1}\nabla\cdot$ denotes the usual Helmholtz projection, we deduce $|\xi|^{-2}\le\hat G(\xi)\le\frac1\lambda|\xi|^{-2}$ pointwise.
Considering the inverse symbol $\hat B(\xi):=\hat G(\xi)^{-1}$,
the conclusion follows. 
\qed

\bibliographystyle{plain}
\bibliography{biblio}

\end{document}